\newtheorem*{theorem*}{Theorem}
\newtheorem*{maintheorem*}{Main Theorem}
\newtheorem*{question*}{Question}
\theoremstyle{remark}
\theoremstyle{definition}
\setlist[1]{labelindent=\parindent, leftmargin=*}
\DeclareMathOperator{\car}{char}
\newcommand{\ZZ}{\mathbb{Z}}
\newcommand{\kk}{\mathrm{\mathbf{k}}}
\DeclareMathOperator{\KblSm}{K_{\mathrm{bl}}(\mathcal{S}m^{\mathrm{c}}_{\mathrm{\kk}})}
\DeclareMathOperator{\KV}{K_0(\mathcal{V}_{\kk})}
\DeclareMathOperator{\Ab}{\ZZ[\mathcal{A}_{\kk}]}
\title{
	Does the Grothendieck ring of varieties contain nilpotent elements?}
\author[A. Bot, A. Cangini, \and I. van Santen]
{Anna Bot, Alessio Cangini, \and Immanuel van Santen}
\date{\today}
\address{Klinik für Infektionskrankheiten und Spitalhygiene,
	Universitätsspital Zürich,\newline
	\indent Rämistrasse 100, CH-8091 Zürich, Switzerland}
\email{anna.bot@usz.ch}
\address{Departement Mathematik und Informatik, 
	Universit\"at Basel,\newline
	\indent Spiegelgasse 1, CH-4051 Basel, Switzerland}
\email{alessio.cangini@unibas.ch}
\address{Departement Mathematik und Statistik, 
	Universit\"at Bern,\newline
	\indent Sidlerstrasse 5, CH-3012 Bern, Switzerland}
\email{immanuel.van.santen@gmail.com}
\newcounter{claim}   
\renewcommand{\theclaim}{\arabic{claim}}
\begin{document}
\setcounter{tocdepth}{1}

\subjclass[2020]{14A10}
\keywords{Grothendieck ring of varieties, nilpotent elements}
\thanks{The three authors acknowledge support, respectively, from the Swiss 
National Science Foundation grants: ``Geometrically ruled surfaces'' (grant 
no. $200020\_192217$), ``Rational points, arithmetic dynamics, and heights'' 
(grant no. $200020\_219397$), and ``Symmetry in Large Algebraic Structures '' (grant no. $200020\_10000940$).}

\begin{abstract}
	We show that a ring closely related to the Grothendieck ring of varieties 
	has nilpotent elements, provided that the characteristic of the ground 
	field is equal to $11$ or at least $17$.
\end{abstract}

\maketitle

We work over an algebraically closed field $\kk$.
The \textit{Grothendieck ring $\KV$ of varieties over} $\kk$ is given by
all $\ZZ$-linear combinations of isomorphism classes of varieties defined over 
$\kk$ with the product $[X]\cdot[Y] \coloneqq [X\times_\kk Y]$ modulo the ideal 
generated by $[X] - [Z] - [X\setminus Z]$ where $Z$ is a closed subvariety of $X$.
The ring $\KblSm$ is given by all $\ZZ$-linear combinations of 
isomorphism classes of complete smooth varieties defined over $\kk$ 
with the product $[X]\cdot[Y] \coloneqq [X\times_\kk Y]$ modulo the ideal 
generated by $[\textrm{Bl}_Y X] - [E] - [X] + [Y]$ where $\pi \colon \textrm{Bl}_Y X \to X$ 
denotes the blow-up of $X$ 
in a closed smooth subvariety $Y \subseteq X$ and $E = \pi^{-1}(Y)$ denotes the 
exceptional divisor.

There is a natural homomorphism $\Psi \colon \KblSm \to \KV$.
If $\car{\kk} = 0$, by a remarkable result of Bittner~\cite[Theorem~3.1]{Bi2004The-universal-Eule},
$\Psi$ is an isomorphism.
We point out that in positive characteristic, $\Psi$ is an isomorphism as well 
if weak factorization of birational maps holds.

Assume that $\car\kk = 11$ or $\car\kk \ge 17$.
Joshi~\cite[Theorem~2.1.1]{Jo2025On-the-Grothendiec} showed that $\KblSm$ 
contains non-trivial zero divisors.
We remark that one can strengthen the argument to show:

\begin{theorem*}
	If $\car\kk = 11$ or $\car\kk\geq 17$, then
	the ring $\KblSm$ contains non-trivial nilpotent elements.
\end{theorem*}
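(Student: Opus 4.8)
The plan is to exhibit an explicit square-zero element of the form $x = [E] - [E']$, where $E$ and $E'$ are two non-isomorphic supersingular elliptic curves over $\kk$, and the novelty over the mere zero-divisor statement will be to use the \emph{mixed} product isomorphism $E \times E' \cong E \times E$ (yielding $x^2 = 0$) rather than only $E \times E \cong E' \times E'$ (which gives the difference-of-squares zero divisor $([E]-[E'])([E]+[E'])$). Write $p = \car\kk$. Such a pair $E, E'$ exists precisely under our hypothesis: the number of supersingular $j$-invariants in characteristic $p$ is at least $2$ exactly when $p = 11$ or $p \geq 17$, whereas for $p \in \{2,3,5,7,13\}$ there is a unique supersingular elliptic curve up to isomorphism. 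Since all supersingular $j$-invariants lie in $\overline{\mathbb{F}}_p \subseteq \kk$, I may fix supersingular elliptic curves $E, E'$ over $\overline{\mathbb{F}}_p$ with $j(E) \neq j(E')$, so that $E \not\cong E'$ as $\kk$-varieties after base change.

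First I would show that $x^2 = 0$. The surfaces $E \times E$, $E \times E'$ and $E' \times E'$ are all superspecial abelian surfaces over $\kk$, and by a classical theorem of Deligne and Shioda any two superspecial abelian varieties of a fixed dimension $g \geq 2$ over $\overline{\mathbb{F}}_p$ are isomorphic (this rests on the fact that all locally free right modules of rank $\geq 2$ over a maximal order in the quaternion algebra $B_{p,\infty}$ are mutually isomorphic). Base changing to $\kk$ gives $E \times E \cong E \times E' \cong E' \times E'$, whence in $\KblSm$
\[
x^2 = [E]^2 - 2[E][E'] + [E']^2 = [E\times E] - 2[E \times E'] + [E' \times E'] = 0 .
\]

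It remains to prove that $x \neq 0$, which is the crux of the argument: since $E$ and $E'$ are isogenous, every motivic measure factoring through numerical or homological motives (point counts, Hodge- or $\ell$-adic realisations, and the like) assigns them the same value, so a strictly finer invariant is needed. Here I would pass to the stable birational specialisation. Because $\KblSm$ is defined through the blow-up relations, the Larsen--Lunts construction applies unconditionally over $\kk$: blow-ups and projective bundles preserve stable birational class, so one checks directly that the blow-up relation maps to $0$, and $[X] \mapsto \langle X\rangle$ descends to a ring homomorphism $\KblSm \to \ZZ[\mathrm{SB}_\kk]$ into the monoid ring on stable birational classes of smooth complete $\kk$-varieties. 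Under this map $x \mapsto \langle E\rangle - \langle E'\rangle$, and since non-isomorphic smooth projective curves of positive genus are never stably birational (e.g. by birational invariance of the maximal rationally connected quotient, valid in all characteristics), the classes $\langle E\rangle$ and $\langle E'\rangle$ are distinct elements of the monoid, hence linearly independent in $\ZZ[\mathrm{SB}_\kk]$; therefore $x \neq 0$. This last step is the main obstacle: one must ensure that the stable birational quotient is available in positive characteristic, and it is exactly the passage to the blow-up presentation $\KblSm$—rather than $\KV$, where weak factorisation is unavailable—that makes it work. Combining the two steps, $x = [E]-[E']$ is a non-trivial nilpotent element of $\KblSm$, which proves the theorem.
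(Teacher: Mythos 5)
Your proposal is correct, and its square-zero half is exactly the paper's argument: the same characterization of when two non-isomorphic supersingular elliptic curves exist ($p=11$ or $p\ge 17$), and the same appeal to the Deligne--Shioda theorem on superspecial abelian varieties to obtain $E\times E\cong E\times E'\cong E'\times E'$, hence $([E]-[E'])^2=0$ in $\KblSm$. Where you genuinely diverge is the non-vanishing step. The paper sends a smooth complete variety to the class of its Albanese, obtaining a ring homomorphism $\KblSm\to\Ab$ into the free abelian group on isomorphism classes of abelian varieties; well-definedness is checked on the blow-up relation exactly as in your argument (blow-ups and projective bundles do not change the Albanese), and $[E]-[E']\neq 0$ is then immediate because $\Ab$ is free on isomorphism classes and $E\not\cong E'$. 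You instead use the Larsen--Lunts homomorphism $\KblSm\to\ZZ[\mathrm{SB}_\kk]$; both detections exist for the same reason (the blow-up presentation makes the relation-check direct, no weak factorization needed), but your target requires the extra geometric input that non-isomorphic elliptic curves are never stably birational. That input is true, yet your justification---birational invariance of the MRC quotient ``valid in all characteristics''---is the one fragile point, since the MRC formalism in characteristic $p$ is delicate (Graber--Harris--Starr fails, unirational surfaces of general type exist), so you should argue directly. This is easy here: every rational map $\PP^n\dashrightarrow E'$ is constant, so a birational map $\phi\colon E\times\PP^n\dashrightarrow E'\times\PP^n$ and its inverse, composed with the two projections, induce morphisms $g\colon E\to E'$ and $h\colon E'\to E$ satisfying $h\circ g=\mathrm{id}_E$ and $g\circ h=\mathrm{id}_{E'}$ (cancel the dominant projections); note that merely producing finite morphisms in both directions would not suffice, since isogenous elliptic curves admit those. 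Amusingly, the slickest proof that $\langle E\rangle\neq\langle E'\rangle$ is birational invariance of the Albanese---precisely the homomorphism the paper applies directly, bypassing the stable birational monoid altogether. In short: same nilpotent element and same square-zero mechanism; the paper's detection is shorter and characteristic-free, while yours routes through Larsen--Lunts and needs an extra, fixable, curve-theoretic lemma.
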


\begin{proof}
	The assumption on the characteristic is equivalent to the existance of two 
	distinct supersingular elliptic curves~$E_1$ and~$E_2$~\cite[see Ch. V, 
	Theorem 4.1]{Si2009The-arithmetic-of-}.
	A noteworthy theorem of Deligne~\cite[see Theorem 3.5]{Sh1979Supersingular-K3-s} 
	implies that~$E_1 \times_{\kk} E_1$, $E_2 \times_{\kk} E_2$ and 
	$E_1 \times_{\kk} E_2$ are isomorphic.
	Now let~$\Ab$ denote the free abelian group generated by 
	isomorphism classes of abelian varieties over $\kk$ which becomes a ring via
	the product $[A]\cdot[A'] \coloneqq [A\times_{\kk} A']$. In $\Ab$, we have 
	\[
		([E_1]-[E_2])^2 = 
		[E_1 \times_{\kk} E_1] - 2 [E_1 \times_{\kk} E_2] + [E_2 \times_{\kk} E_2] = 0 \, .
	\]
	
	There is a natural homomorphism~$\KblSm \to \Ab$ which maps the class of a 
	smooth complete variety to the class of its Albanese~\cite[Theorem~2.2.1]{Sh1979Supersingular-K3-s}. 
	Note that this homomorphism has a natural section~$\Ab \to \KblSm$ and
	that by construction $[E_1]-[E_2] \neq 0$ inside $\Ab$. Hence, 
	the image of~$[E_1]-[E_2]$ in~$\KblSm$ is non-zero, and thus it 
	is our desired nilpotent element.
\end{proof}

This short note arose out of the attempt to construct nilpotent elements in 
$\KV$. So we finish with the following:

\begin{question*}
	Does the Grothendieck ring of varieties over $\kk$ contain nilpotent 
	elements?
\end{question*}

\par\bigskip
\renewcommand{\MR}[1]{}
\bibliographystyle{amsalpha}
\bibliography{refs}

\end{document}